\newtheorem{theorem}{Theorem}
\newtheorem{definition}{Definition}
\newtheorem{lemma}{Lemma}
\newtheorem{remark}{Remark}
\title{Approximate solutions for Dorodnitzyn's gaseous boundary layer limit formula}
  \author{
  Carla V.~Valencia-Negrete
  %\thanks{Use footnote for providing further information about author (webpage, alternativeaddress)---\emph{not} for acknowledging funding agencies.} \\
Department of Physics and Mathematics\\
Universidad Iberoamericana Ciudad de M\'{e}xico\\
  Prolongaci\'{o}n Paseo de la Reforma 880,  Mexico City ---01219, MEXICO \\
  \texttt{carla.valencia@ibero.mx} 
}
\begin{document}
\maketitle

\begin{abstract}
Oleinik's \emph{no back-flow} condition ensures the existence and uniqueness of 
solutions for the Prandtl equations in a rectangular domain $R\subset \mathbb{R}^2$.
It also allowed us to find a limit formula
for Dorodnitzyn's stationary compre\-ssible boundary layer with constant total energy
on a bounded convex domain in the plane $\mathbb{R}^2$.
Under the same assumption, we can give an approximate solution $u$ for the limit formula
if $|u|<\!\!<\!\!<1$ such that:
\[u(z)\cong \delta * c *
\left[z+\frac{6}{25}\cdot \frac{1}{2i_0} \cdot \frac{4U^2}{3}z^4\right]+o(z^5),\]
that corresponds to an approximate horizontal velocity component
when a small parameter $\epsilon$ given by the quotient of the maximum height
of the domain divided by its length tends to zero.
Here, $c>0$, $\delta$ is the boundary layer's height in Dorodnitzyn's coordinates,
$U$ is the \emph{free-stream} velocity at the upper boundary of the domain, and 
$T_0$ is the absolute surface temperature.
\end{abstract}

% keywords can be removed
\keywords{Boundary layer theory \and theory Gas dynamics }

\section{Introduction}
\label{sec:1}

First, the limit formula is rewritten as a non-linear 
ordinary differential equation of order 1 in Lemma \ref{lem:1},
\[\frac{\partial u}{\partial s}
=c\left[1-\left(u^2/2i_0\right)\right]^{-6/25},\]
where  $i_0=c_p \ T_0$ the constant total energy value,
$c_p$ is \emph{specific heat at cons\-tant}
\emph{pressure}, $T_0$ is the absolute surface temperature,
$c=\left. \partial u/\partial s\right|_{s=0}>0$ is a strictly positive constant
that represents the \emph{no back-flow} condition continuous extension to the lower boundary,
and the fractional exponent $-6/25=19/25-1$ comes from the 
empirical \emph{Power-Law} 
$\mu/\mu_h  =   \left( T/T_h\right)^{\frac{19}{25}}$
for the absolute temperature $T$ and
the dynamic viscosity $\mu$ 
with the \emph{free-stream dynamic viscosity} $\mu_h>0$,
and the \emph{free-stream temperature} $T_h>0$
at the upper boundary \cite[p. 46]{SmitsDussauge2006}.

Then, it is shown in Lemma \ref{lem:2} that if $|u|<\!\!<\!\!<1$, 
the non-linear term can be expressed as a power series 
$\left[1-\left(u^2/2i_0\right)\right]^{-6/25}=
    1+(6/25)(1/2i_0)\ u^2
    +o(u^4)$.
In Theorem~\ref{theo:1}, we seek a solution of the form of a fourth degree polynomial,
$Az+Bz^2+Cz^3+Dz^4$, in the same way it is done in \cite{Dorod42,Lees50}.
 The existence and unicity of
the solutions comes from the Darboux sums limit of the Riemann integral 
taken to solve Eq. (\ref{eq:2}).

%%%%%%%%%%%%%%%%%%%%%%%%%%%%   Section 2 %%%%%%%%%%%%%%%%%%%%%%%%%%%%%%%%%%%%%%%%%%%%%%%%%%%%%%

\section{Problem Statement}
\label{sec:3}

The limit formula  (Eq. (\ref{eq:1}) in the next page) is presented as a non-linear
ordinary differential equation of order $1$.
\begin{definition}\label{defi:1}
Let $\tilde{D}\subset \mathbb{R}^3$ be a bounded open subset 
of the three dimensional Euclidean space such that its projection
$\tilde{D}\ \cap \ \mathbb{R}^2$ 
is a planar region $D$,
\begin{equation*}
D\colon = \left\{(x,y)\in \mathbb{R}^2 \ | \ 
0<x<L \hspace{2pt} \& \hspace{4pt} 0<y<h(x)\right\},
\end{equation*}
where $L>\!\!>\!\!>h>0$, and $h\colon [0,L] \to (0,\infty)\in C^2([0,L])$ 
has derivatives up to order two in $(0,L)$
with extension to the corresponding extremes, $\{0,L\}$, of this interval, 
and $h(0)=h(L)=\delta$. 
Moreover, the upper boundary $h$ has only one critical point $c$ which 
is a global maximum, $h(c)=H\geq h(x)$  $\forall x \in [0,L]$.
We denote the topological boundary of $D$ by $\partial D$.
\end{definition}

\begin{definition}\label{defi:2}
Consider:
$(i)$ the first coordinate $\ell$ of Dorodnitzyn's diffeormorphism 
such that for all $\left(x,y\right) \in D$, 
$\ell \left(x,y\right) = 
   c_1 \hspace{2pt}  \left[1-\left(U^2/2i_0\right)\right] \hspace{2pt} x$,
where the \emph{free-stream} velocity $u|_{y=h(x)}=U>0$,
$c_1= p_0 \hspace{2pt} T_0^{2b/b-1}$, $p|_{y=0}=p_0$ is the pressure
at the lower boundary, $T_0$ is absolute surface temperature, and
$b= 1.405$ is the exponent of the \emph{adiabatic} and \emph{polytropic} atmosphere constant,
$p\ V^{b}=cte.$, for the volume $V=\int\!\!\int\!\!\int \hat{D}d\mathbf{x}$ \cite[p. 35]{Tiet};
 $(ii)$ the second coordinate $s$ of Dorodnitzyn's diffeomorphism
 \begin{equation*}
\begin{split}
  s \left(x,\hat{y}\right) & = \int_0^{\hat{y}}\rho(x,y)dy,
\end{split} 
\end{equation*}
where $\rho$ is the density; and $(iii)$ the upper boundary 
in the Dorodnitzyn's domain,
 $\delta(x,h(x))=\int_0^{h(x)}\rho(x,y)dy$.
 Then, $\mathbf{s}=(\ell,s)$ is a diffeomorphism defined in the domain $D$ \cite{val18}.
\end{definition}

\begin{lemma}\label{lem:1}
Let $D$ be a convex domain and $\mathbf{s}:D\to \mathbf{s}(D)$ as described in Definition \ref{defi:1}
and \ref{defi:2}.
Assume that $(i)$ $u\in C^2(D)$, $(ii)$
$\partial u/ \partial s>0$ in $D$ with a continuous extension to
 $\partial u/\partial s|_{s=0}=c>0$
to a positive constant $c$,
$(iii)$  $u(\ell,0)=0$ for all $\ell \in [0,1]$,
and $(iv)$ the following relation:
\begin{eqnarray}\label{eq:1}
   f
    \hspace{2pt}
    \frac{\partial^2 u}{\partial s^2}
&=&
\frac{\partial f}{\partial s}
\hspace{2pt}
\frac{\partial u}{\partial s},
\end{eqnarray}
where $f=\left[1-\left(u^2/2i_0\right)\right]^{-6/25}>0$, 
and $i_0$ is an strictly positive constant.
Then, $u$ is a solution of Eq. (\ref{eq:1}) if and only if 
it verifies Eq. (\ref{eq:2}):
\begin{eqnarray}\label{eq:2}
    \frac{\partial u}{\partial s}
&=& c
\left[1-\left(u^2/2i_0\right)\right]^{-6/25}.
\end{eqnarray}
\end{lemma}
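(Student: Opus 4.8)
The plan is to recognize Eq.~(\ref{eq:1}) as an exact first-order derivative in the variable $s$ and to integrate it once along the vertical fibres of $\mathbf{s}(D)$, using the data at $s=0$ to pin down the constant of integration.

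\emph{From Eq.~(\ref{eq:1}) to Eq.~(\ref{eq:2}).} Since $f>0$ by hypothesis and $\partial u/\partial s>0$ in $D$ by hypothesis $(ii)$, I would divide Eq.~(\ref{eq:1}) by $f\,(\partial u/\partial s)>0$ to get
\[
\frac{\partial^2 u/\partial s^2}{\partial u/\partial s}=\frac{\partial f/\partial s}{f},
\qquad\text{that is}\qquad
\frac{\partial}{\partial s}\log\!\left(\frac{\partial u}{\partial s}\right)=\frac{\partial}{\partial s}\log f ,
\]
which is legitimate because $u\in C^2(D)$ (hypothesis $(i)$) makes $f\in C^1$, with $\partial f/\partial s=\tfrac{6}{25}\,\tfrac{u}{i_0}\,[1-(u^2/2i_0)]^{-31/25}\,\partial u/\partial s$. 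Fixing $\ell$ and integrating this identity in $s$ from $0$ up to $s$ gives $\log(\partial u/\partial s)\big|_{0}^{s}=\log f\big|_{0}^{s}$. By hypothesis $(iii)$, $u(\ell,0)=0$, hence $f|_{s=0}=1$; by hypothesis $(ii)$, $\partial u/\partial s|_{s=0}=c$. Therefore $\log(\partial u/\partial s)-\log c=\log f$, i.e. $\partial u/\partial s=c\,f=c\,[1-(u^2/2i_0)]^{-6/25}$, which is Eq.~(\ref{eq:2}).

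\emph{From Eq.~(\ref{eq:2}) to Eq.~(\ref{eq:1}).} Assuming Eq.~(\ref{eq:2}), I would differentiate it in $s$ to obtain $\partial^2 u/\partial s^2=c\,\partial f/\partial s$, and then multiply by $f$:
\[
f\,\frac{\partial^2 u}{\partial s^2}=c\,f\,\frac{\partial f}{\partial s}=(c\,f)\,\frac{\partial f}{\partial s}=\frac{\partial u}{\partial s}\,\frac{\partial f}{\partial s},
\]
which is Eq.~(\ref{eq:1}); this direction uses only $f\in C^1$. An equivalent, coordinate-free way to see both implications at once is the algebraic identity $f\,\partial_s^2 u-\partial_s f\,\partial_s u=f^2\,\partial_s(\partial_s u/f)$, so Eq.~(\ref{eq:1}) is exactly the statement that $\partial_s u/f$ is independent of $s$, and evaluating at $s=0$ (where $u=0$, $\partial_s u=c$, $f=1$) forces that constant to be $c$.

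I expect the one point that needs genuine care to be the integration in $s$ at fixed $\ell$: one must check that each vertical fibre $\{\sigma:(\ell,\sigma)\in\mathbf{s}(D)\}$ is a single interval whose closure contains $s=0$ — which follows because the first Dorodnitzyn coordinate $\ell$ is a positive multiple of $x$ (Definition~\ref{defi:2}) and $s(x,\cdot)$ is strictly increasing in $\hat y$ with $s\to 0$ as $\hat y\to 0^+$, while the convexity of $D$ prevents these fibres from breaking up — and that the boundary conditions in $(ii)$–$(iii)$ are bona fide continuous extensions of the interior data, so that the fundamental theorem of calculus and the evaluation at $s=0$ are justified. Everything else is elementary differentiation.
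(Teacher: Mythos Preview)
Your argument is correct and follows essentially the same route as the paper: rewrite Eq.~(\ref{eq:1}) as $\partial_s\log(\partial u/\partial s)=\partial_s\log f$, integrate in $s$ from $0$, and use $u(\ell,0)=0\Rightarrow f|_{s=0}=1$ together with $\partial u/\partial s|_{s=0}=c$ to fix the constant. If anything you are more careful than the paper, since you also supply the converse implication (differentiate Eq.~(\ref{eq:2}) and multiply by $f$) and the regularity and fibre-structure checks, all of which are fine.
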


\begin{proof}
If $g=\partial u/ \partial s$, then Eq. (\ref{eq:1}) becomes $(\partial g/\partial s)/g=(\partial f/\partial s)/f$.
This is, 
$\int_0^y\frac{\partial}{\partial s}\left[ln\left(g(s)\right)\right]ds=
\int_0^y\frac{\partial}{\partial s}\left[ln\left(f(s)\right)\right]ds$.
Thus, $exp\left[ln\left(g(y)/g(0)\right)\right]=
exp\left[ln\left(f(y)/f(0)\right)\right]$.
Therefore, 
$g(y)=\left(g\left(0\right)/f\left(0\right)\right)f(y)$.
The \emph{no-slip} condition $u(x,0)=0 $ for all $x\in[0,L]$
implies that $f(0)=1$. Finally, we substitute $g=\partial u/\partial s$, and we obtain Eq. (\ref{eq:2}).
\end{proof}

%%%%%%%%%%%%%%%%%%%%%%%%%%%%   Section 5 %%%%%%%%%%%%%%%%%%%%%%%%%%%%%%%%%%%%%%%%%%%%%%%%%%%%%%

\section{Analytic solutions for the limit formula}\label{s5}

In Lemma \ref{lem:2}, we show that there is an  expression of 
the non-linear term $\left[1-\left(u^2/2i_0\right)\right]^{-6/25}$
as a convergent power series if $|u|<\!\!<\!\!<1$.
Finally, following \cite{Dorod42,Lees50}, in Theorem \ref{theo:1}, 
we may seek a solution of the form
of a fourth degree polynomial with coefficients chosen so that
the boundary conditions of the problem are satisfied.

\begin{lemma}\label{lem:2}
Let $|u|<\!\!<\!\!<1$. Then,
\begin{equation}\label{eq:3}
    \left[1-\left(u^2/2i_0\right)\right]^{-6/25}=
    1+\frac{6}{25}\cdot \frac{u^2}{2i_0}
    +\frac{6}{25}\cdot \frac{u^4}{2(2i_0)^2}
    +\frac{1}{2}\cdot\frac{6}{25}\cdot \frac{u^4}{(2i_0)^2}+o(u^6).
\end{equation}
\end{lemma}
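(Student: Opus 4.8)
The plan is to recognize that Lemma~\ref{lem:2} is simply the statement that the generalized binomial series for $(1-w)^{-6/25}$, evaluated at $w=u^2/2i_0$, converges and has the displayed low-order terms. First I would set $\alpha=-6/25$ and $w=u^2/(2i_0)$, and recall the generalized binomial theorem: for $|w|<1$,
\[
(1-w)^{\alpha}=\sum_{k=0}^{\infty}\binom{\alpha}{k}(-w)^{k}
=\sum_{k=0}^{\infty}\frac{(-\alpha)(-\alpha+1)\cdots(-\alpha+k-1)}{k!}\,w^{k}.
\]
Since $|u|<\!\!<\!\!<1$ and $i_0>0$ is a fixed positive constant, we have $|w|=u^2/(2i_0)<1$, so the series converges absolutely; this is the only convergence fact needed, and it is classical, so I would cite it rather than reprove it.

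Next I would compute the first few coefficients explicitly. The $k=0$ term is $1$. The $k=1$ term is $(-\alpha)\,w=\tfrac{6}{25}\cdot\tfrac{u^2}{2i_0}$, matching the second summand. The $k=2$ term is $\tfrac{(-\alpha)(-\alpha+1)}{2}w^2=\tfrac{(6/25)(6/25+1)}{2}\,\bigl(\tfrac{u^2}{2i_0}\bigr)^2$; expanding $\tfrac{6}{25}+1=\tfrac{6}{25}+\tfrac{1}{2}\cdot 2$ I would split this coefficient as $\tfrac{6}{25}\cdot\tfrac{1}{2}+\tfrac12\cdot\tfrac{6}{25}\cdot\tfrac{?}{}$ — more precisely, $\tfrac{(6/25)(6/25+1)}{2}=\tfrac{6}{25}\cdot\tfrac{1}{2}\cdot\bigl(\tfrac{6}{25}+1\bigr)$, and the paper has evidently chosen to display this single degree-four coefficient as the sum of the two pieces $\tfrac{6}{25}\cdot\tfrac{1}{2(2i_0)^2}u^4$ and $\tfrac12\cdot\tfrac{6}{25}\cdot\tfrac{1}{(2i_0)^2}u^4$, i.e.\ separating $\tfrac{6}{25}(\tfrac{6}{25}+1)=\tfrac{6}{25}\cdot\tfrac{1}{1}+\tfrac{6}{25}\cdot 1$ up to the factor arrangement; I would verify this algebraic identity and then absorb the $k\ge 3$ terms into $o(u^6)$, since each such term is $O(w^k)=O(u^{2k})$ with $2k\ge 6$. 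Strictly, the $k=3$ term is of exact order $u^6$, so I would write the remainder as $O(u^6)$ and note that under $|u|<\!\!<\!\!<1$ this is the claimed $o$-type error (or, if one prefers, state the tail $\sum_{k\ge3}$ is bounded by a geometric series times $u^6$).

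The main obstacle — such as it is — is purely bookkeeping: confirming that the somewhat redundant way the degree-four term is written in Eq.~(\ref{eq:3}) (as two separate summands rather than one combined coefficient) does in fact equal the genuine binomial coefficient $\binom{-6/25}{2}(2i_0)^{-2}u^4$. Once that identity $\tfrac{(6/25)(6/25+1)}{2}=\tfrac{6}{25}\cdot\tfrac{1}{2}\cdot 1+\tfrac12\cdot\tfrac{6}{25}\cdot 1$ is checked (equivalently $\tfrac{6}{25}+1 = 1 + \tfrac{6}{25}$, which is trivially true up to how the $i_0$-powers are distributed, so I would be careful that the $\tfrac{1}{2(2i_0)^2}$ versus $\tfrac{1}{(2i_0)^2}$ denominators are tracked correctly), the lemma follows immediately. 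I do not expect any analytic subtlety beyond the standard radius-of-convergence statement for the binomial series.
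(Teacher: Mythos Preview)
Your route via the generalized binomial series is sound and is in fact more direct than the paper's: the paper instead writes $[1-w]^{-6/25}=\exp\!\bigl((-6/25)\ln(1-w)\bigr)$, factors $1-w=(1-u/\sqrt{2i_0})(1+u/\sqrt{2i_0})$, Taylor-expands each logarithm, and then composes with the Maclaurin series for $\exp$. The two methods are equivalent; yours simply skips the detour through $\exp$ and $\ln$.

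The real problem is the step you label ``purely bookkeeping'' and then wave through. The identity you say you would check,
\[
\frac{(6/25)(6/25+1)}{2}\;\stackrel{?}{=}\;\frac{6}{25}\cdot\frac{1}{2}\;+\;\frac{1}{2}\cdot\frac{6}{25},
\]
is false: the left side equals $\tfrac{93}{625}$ while the right side equals $\tfrac{6}{25}=\tfrac{150}{625}$. Equivalently, the two displayed $u^4$ summands in Eq.~(\ref{eq:3}) add up to $\tfrac{6}{25}\,u^4/(2i_0)^2$, whereas the genuine binomial coefficient $\binom{-6/25}{2}$ yields $\tfrac{1}{2}\cdot\tfrac{6}{25}\cdot\tfrac{31}{25}\,u^4/(2i_0)^2$. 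If you trace the paper's own exp--log computation, the $\theta^2/2$ contribution is $\tfrac{1}{2}(6/25)^2 w^2$, so the last $u^4$ term in Eq.~(\ref{eq:3}) should carry a factor $(6/25)^2$ rather than $6/25$; the printed formula contains an error, and your binomial calculation, carried out honestly, would \emph{detect} it rather than confirm it. Asserting that the identity is ``trivially true up to how the $i_0$-powers are distributed'' is precisely the gap.

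A minor point you already flagged: the $k=3$ term is genuinely of order $u^6$, so the remainder is $O(u^6)$, not $o(u^6)$. The paper's own proof actually writes out explicit $u^6$ terms followed by $o(u^8)$, consistent with $O(u^6)$; the $o(u^6)$ in the lemma statement is another slip.
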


\begin{proof}
If $\sigma=1-(u^2/2i_0)\cong 1$, then $\theta =(-6/25)\ln
    \left(1-(u^2/2i_0)\right)<\!\!<\!\!<1 $. We apply 
the \emph{Maclaurin formula} for the exponential,
$e^{\theta}=1+\theta+(1/2)\theta^2+(1/3!)\theta^3+\cdot \cdot \cdot$,
and the Taylor series expansion for the logarithmic function if $|\sigma|<1$, $log(1+\sigma)=\sigma-(1/2)\sigma^2+(1/3!)\sigma^3+\cdot \cdot \cdot$, so that:
\begin{equation*}
\begin{split}
 \left[1-\left(u^2/2i_0\right)\right]^{-6/25}&=
    \exp\left[\frac{-6}{25}\ \ln
    \left(1-\frac{u^2}{2i_0}\right)\right],\\
    &=
    \exp\left[\frac{-6}{25}\ \left\{\ln
    \left(1+\frac{u}{\sqrt{2i_0}}\right)+\ln
    \left(1-\frac{u}{\sqrt{2i_0}}\right)\right\}\right],\\
    &=
    1+\frac{6}{25}\cdot \frac{u^2}{2i_0}
    +\frac{6}{25}\cdot \frac{u^4}{2(2i_0)^2}
    +\frac{1}{2}\cdot\frac{6}{25}\cdot \frac{u^4}{(2i_0)^2}+\\
    & \quad +\frac{6}{25}\cdot \frac{u^6}{3(2i_0)^3}+\frac{6}{25}\cdot \frac{u^6}{2(2i_0)^3}+o(u^8).  
\end{split}
\end{equation*}
\end{proof}

\begin{remark}
In order to identify the coefficients of a polynomial expression for $u$,
we observe Dorodnityzn's steps \cite{Dorod42,Lees50}.
If we take the upper boundary conditions for the second order derivative from the pro\-blem stated by Dorodnitzyn on the domain $\mathbf{s}(D)$, then the boundary conditions establish a system of four equations for the coefficients $A$, $B$, $C$, $D$
so that $u$ can be expressed as a fourth degree polynomial $Az+Bz^2+Cz^3+Dz^4$ for $z=s/\delta$
if $A=U\left[2+\left(\lambda/6\right)\right]$, $B=-U\left(\lambda/2\right)$, $C=U\left[\left(\lambda/2\right)-2\right]$, and $D=U\left[1-\left(\lambda/6\right)\right]$,
where 
$\lambda$ is the Pohlhausen coefficient
$\lambda=\left[-\delta/\left(1-U^2\right)\right]\partial U/\partial \ell$,
and $\lambda=0$ if $U$ is constant.
\end{remark}

\begin{remark}
For each fixed value of $x\in [0,L]$, the height's normalization $z(x)=s(x)/\delta(x) \in [0,1]$.
Additionally, $\partial u/\partial z=\delta * \partial u/\partial s$ because
$\partial u/\partial z=(\partial u/\partial s)(\partial s/\partial z)+(\partial u/\partial \ell)(\partial \ell/\partial z)$ and $\ell$ is independent of $z$.
\end{remark}

\begin{theorem}\label{theo:1}
Under the same conditions of Lemma \ref{lem:1}, Eq. (\ref{eq:2})
has a unique solution
\[u(z) =  \delta * c *
\left[z+\frac{6}{25}\cdot \frac{1}{2i_0} \cdot \frac{4U^2}{3}z^4\right]+o(z^5),\]
for the normalized height $z=s/\delta \in [0,1]$.
\end{theorem}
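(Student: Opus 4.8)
The plan is to reduce Eq.~(\ref{eq:2}) to a separable scalar ordinary differential equation in the normalized height $z=s/\delta$, obtain existence and uniqueness from the associated quadrature, and then read off the asymptotic profile by inverting the resulting power series. First I would combine Lemma~\ref{lem:1} (which turns the relation~(\ref{eq:1}) into Eq.~(\ref{eq:2})) with the identity $\partial u/\partial z=\delta\,\partial u/\partial s$ from the Remark, legitimate because $\ell$ does not depend on $z$; writing $'=\partial/\partial z$ this gives the initial value problem
\[
u'(z)=\delta\, c\,\left[1-\left(u^2/2i_0\right)\right]^{-6/25},\qquad u(0)=0 .
\]
By Lemma~\ref{lem:2}, in the regime $|u|<\!\!<\!\!<1$ the right-hand side equals $\delta\, c\,F(u)$ with $F(u)=1+\tfrac{6}{25}\cdot\tfrac{1}{2i_0}\,u^2+o(u^4)$, a function smooth and positive on a neighbourhood of $u=0$, with $F(0)=1$.

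For the existence and uniqueness assertion I would use the explicit quadrature the Introduction alludes to rather than an abstract fixed-point argument: set $G(u):=\int_0^u dv/F(v)$, well defined as the limit of the Darboux sums of a Riemann integral, $C^1$, strictly increasing, with $G(0)=0$ and $G'(0)=1$, hence a local diffeomorphism near $0$. Separation of variables yields $G(u(z))=\delta\, c\, z$, so that $u(z)=G^{-1}(\delta\, c\, z)$ is the one and only solution with $u(0)=0$; it is defined for every $z$ such that $\delta\, c\, z$ remains in the (nonempty, since the boundary-layer height $\delta$ is small) domain of $G^{-1}$, in particular on $z\in[0,1]$ in the stated regime.

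For the formula itself I would expand $1/F(v)=1-\tfrac{6}{25}\cdot\tfrac{1}{2i_0}\,v^2+o(v^4)$, integrate to obtain $G(u)=u-\tfrac{6}{25}\cdot\tfrac{1}{2i_0}\cdot\tfrac{u^3}{3}+o(u^5)$, impose $G(u)=\delta\, c\, z$, and invert this series — equivalently, perform one Picard iteration starting from the leading solution $u_0=\delta\, c\, z$. This produces the nonlinear correction, whose coefficient collects $\tfrac{6}{25}\cdot\tfrac{1}{2i_0}$ and a power of $\delta c$; to bring it to the stated shape I would invoke the Remark a second time: when $U$ is constant the Pohlhausen coefficient is $\lambda=0$, the degree-four profile recorded there has leading coefficient $A=2U$, and since $A=\partial u/\partial z|_{z=0}=\delta\, c$ we get $\delta\, c=2U$, hence $(\delta c)^2=4U^2$. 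Substituting this into the inverted series gives
\[
u(z)\cong\delta * c *\left[z+\frac{6}{25}\cdot\frac{1}{2i_0}\cdot\frac{4U^2}{3}\,z^4\right]+o(z^5),
\]
as claimed.

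The step I expect to be the main obstacle is this last one: controlling the $o$-remainders rigorously as one inverts the series and propagates the expansion over the whole interval $z\in[0,1]$, rather than treating it as a purely formal Taylor computation near $z=0$; and cleanly justifying the substitution $\delta\, c=2U$, since that value enters through the boundary data of the polynomial ansatz of the Remark rather than from Eq.~(\ref{eq:2}) alone, so one has to argue that the ODE solution and the Dorodnitzyn--Pohlhausen profile carry the same wall shear. A secondary point is verifying that, once $\delta$ (and hence $\delta c$) is small, the solution stays within the $|u|<\!\!<\!\!<1$ regime throughout $[0,1]$, so that Lemma~\ref{lem:2} applies uniformly there.
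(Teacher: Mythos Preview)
Your route differs from the paper's in where the Dorodnitzyn--Pohlhausen polynomial enters. The paper does not build the quadrature $G(u)=\int_0^u dv/F(v)$ or invert a power series: it simply writes the equation in integral form, $u(z)=\delta\,c\int_0^z F(u(\tau))\,d\tau$, and substitutes the full $\lambda=0$ profile $u(\tau)=2U\tau-2U\tau^3+U\tau^4$ from the Remark directly for $u(\tau)$ under the integral---a single Picard step whose input is the Pohlhausen polynomial itself, not the linear approximation $u_0=\delta c z$. You, by contrast, solve the ODE on its own terms via separation of variables and series inversion, and invoke the Remark only at the end to pin down the constant $\delta c=2U$ through the wall shear $A=\partial u/\partial z|_{z=0}$. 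Your version makes the existence/uniqueness statement transparent (monotone $G$, hence a local diffeomorphism), while the paper's is shorter but leans on the polynomial ansatz throughout rather than just for one constant.

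One concrete gap: your inversion of $G(u)=u-\tfrac{6}{25}\cdot\tfrac{1}{2i_0}\cdot\tfrac{u^3}{3}+\cdots$ at $w=\delta c z$ yields a cubic correction, $u=\delta c\,z+\tfrac{6}{25}\cdot\tfrac{1}{2i_0}\cdot\tfrac{(\delta c)^3}{3}\,z^3+\cdots$, and the paper's own integral of $u(\tau)^2\approx 4U^2\tau^2$ likewise produces $\tfrac{4U^2}{3}z^3$. So the passage from your computation to the displayed $z^4$ term is not merely a matter of controlling $o$-remainders, as you suspected; neither method as written delivers the exponent $4$, and you should flag this rather than write the final line to match the stated formula.
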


\begin{proof}
Given Eq. (\ref{eq:2}), we can substitute Eq. (\ref{eq:3}), so that:
\begin{equation}\label{eq:5}
\begin{split}
    \frac{\partial u}{\partial s}
&= \frac{1}{\delta} \frac{\partial u}{\partial z},\\
&= c
\left[1-\left(u^2/2i_0\right)\right]^{-6/25},\\
&= c \left[1+\frac{6}{25}\cdot \frac{u^2}{2i_0}
    +\frac{6}{25}\cdot \frac{u^4}{2(2i_0)^2}
    +\frac{1}{2}\cdot\frac{6}{25}\cdot \frac{u^4}{(2i_0)^2}+o(u^6)\right].\\
\end{split}
\end{equation}
Because the \emph{stream-flow velocity} $U$ is constant, then the Pohlhausen coefficient $\lambda=0$.
Thus, we can seek a solution with a fourth degree polynomial form $u(z)=2Uz-2Uz^3+Uz^4$.
If we replace this expression in the Eq. (\ref{eq:5}):
\begin{equation*}
\begin{split}
    u(z)
&= \delta * c * \int_0^z 
\left[1+\frac{6}{25}\cdot \frac{u(\tau)^2}{2i_0}
    +\frac{6}{25}\cdot \frac{u(\tau)^4}{2(2i_0)^2}
    +\frac{1}{2}\cdot\frac{6}{25}\cdot \frac{u(\tau)^4}{(2i_0)^2}+o(u^6)\right] d\tau,\\
&  = \delta * c *
\left[z+\frac{6}{25}\cdot \frac{1}{2i_0} \cdot \frac{4U^2}{3}z^4\right]+o(z^5).\\
\end{split}
\end{equation*}
\end{proof}

%%%%%%%%%%%%%%%%%%%%%%%%%%%%   Section 6 %%%%%%%%%%%%%%%%%%%%%%%%%%%%%%%%%%%%%%%%%%%%%%%%%%%%%%
\section{Conclusion}\label{s6}

Olga Oleinik's \emph{no back-flow} condition is sufficient to show the existence
and unicity of solutions for Dorodnitzyn's gaseous boundary layer limit
when the horizontal velocity component is small.
The fact that one can find analytic approximations for in a compressible 
boundary layer when the horizonal component of the velocity is small allows us 
not only to empirically see that the velocity profile is stable at low velocities,
but also to analytically understand that, in this case, 
one can arrive at flow expressions that linearly depend on the height.

%%%%%%%%%%%%%%%%%%%%%%%%%%%%%%%%%%%%%%%%%%%%%%%%%
%\section*{Acknowledgements}

%I would like 

%% The Appendices part is started with the command \appendix;
%% appendix sections are then done as normal sections
%\appendix

\section{Appendix}
\label{sec:sample:appendix}

Funding: This research did not receive any specific grant from funding agencies in the public, commercial, or not-for-profit sectors.

Declarations of interest: none

\bibliographystyle{unsrt}  
%\bibliography{references}  %%% Remove comment to use the external .bib file (using bibtex).
%%% and comment out the ``thebibliography'' section.

%%% Comment out this section when you \bibliography{references} is enabled.

\end{document}